\titleformat{\subsubsection}[runin]
{\normalfont\normalsize\bfseries}{\thesubsubsection}{1em}{}
\numberwithin{equation}{section}
\newcommand{\simplex}[1]{\Delta_{#1}}
\newcommand{\smax}{\sigma_\mathtt{max}}
\newcommand{\defeq}{:=}
\newcommand{\norm}[1]{\left \| #1 \right \|}
\newcommand{\R}{\mathbb{R}}
\newcommand{\ip}[1] {\left\langle #1 \right\rangle }
\newtheorem{thm}{Theorem}[section]
\newtheorem{theorem}{Theorem}[section]
\newtheorem{proposition}[thm]{Proposition}
\newtheorem{lemma}[thm]{Lemma}
\crefname{claim}{claim}{claims}
\Crefname{claim}{Claim}{Claims}
\crefname{lem}{lemma}{lemmas}
\Crefname{lem}{Lemma}{Lemmas}
\crefname{algorithm}{algorithm}{algorithms}
\Crefname{algorithm}{Algorithm}{Algorithms}
\theoremstyle{remark}
\definecolor{blue}{rgb}{0,0,0}
\begin{document}

	\title{An Elementary Proof of the Near Optimality\\ of LogSumExp Smoothing}

	  \author{Thabo Samakhoana\footnote{Johns Hopkins University, Department of Applied Mathematics and Statistics, \url{tsamakh1@jhu.edu}} \qquad Benjamin Grimmer\footnote{Johns Hopkins University, Department of Applied Mathematics and Statistics, \url{grimmer@jhu.edu}}}

	\date{}
	\maketitle

	\begin{abstract}
    We consider the design of smoothings of the (coordinate-wise) max function in $\mathbb{R}^d$ in the infinity norm. The LogSumExp function $f(x)=\ln(\sum^d_i\exp(x_i))$ provides a classical smoothing, differing from the max function in value by at most $\ln(d)$. We provide an elementary construction of a lower bound, establishing that every overestimating smoothing of the max function must differ by at least $\sim 0.8145\ln(d)$. Hence, LogSumExp is optimal up to small constant factors. However, {\color{blue} we provide strictly stronger smoothings showing the entropy-based LogSumExp approach is not exactly optimal.} In small dimensions, we propose exactly optimal smoothings, attaining our lower bound.
\end{abstract}

      \section{Introduction}

    We consider the task of smoothing the coordinate-wise max function $\smax(x) = \max_{i=1,\dots, d} x_i$ with respect to the infinity norm in $d$ dimensions. Such smoothings play an important role in the acceleration of nonsmooth optimization algorithms~\cite{Nesterov2005} and recently in machine learning~\cite{Martins2016,Asadi2017}. In both of these settings, the function $f_\mathtt{LSE}(x) = \ln(\sum^d_i\exp(x_i))$ has played a central role as the default ``best'' choice in the engineering of algorithms.
    
    The canonical choice of the LogSumExp function $f_\mathtt{LSE}$ (also known as the ``softmax'') is a convex overestimator of $\smax$ that is $1$-smooth with respect to the infinity norm:
    \begin{equation} \label{eq:def-smoothness}
        \|\nabla f(x) - \nabla f(y)\|_1 \leq \|x-y\|_\infty \qquad \forall x,y\in\mathbb{R}^d.
    \end{equation}
    Moreover, $f_\mathtt{LSE}$ closely approximates the max function, having
    $$ \|f_\mathtt{LSE} - \smax\|_\infty = \ln(d) $$
    where $\|f-\sigma\|_\infty \defeq \sup_{x\in\mathbb{R}^d} |f(x) - \sigma(x)|$.
    Generally, we say that $f$ is a $\delta$-smoothing with respect to the infinity norm of the max function if $f$ is convex and $1$-smooth w.r.t.~$\|\cdot\|_\infty$ with $\|f-\smax\|_\infty \leq \delta$. Hence, $f_\mathtt{LSE}$ is a $\ln(d)$-smoothing of $\smax$.

    Such $\delta$-smoothings in the infinity norm play an important role for both of the applications mentioned above. Below we discuss these two core motivations:
    
    \noindent {\it (i) In nonsmooth optimization,} a common form for a problem to take is minimizing a maximum of convex functions
    $$ \min_{y\in \mathbb{R}^n} \max_{i=1,\dots, d} g_i(y) = \smax\circ g (y) $$
    where $g(y) = [g_1(y),\dots, g_d(y)]^T$. For general convex nonsmooth optimization, any first-order subgradient method seeking an $\epsilon$-minimizer requires at least $\Omega(1/\epsilon^2)$ subgradient oracle queries. However, if each individual component objective $g_i$ is $M$-Lipschitz and $L$-smooth with respect to the Euclidean norm in $\mathbb{R}^n$, then accelerated optimization algorithms from smooth convex optimization can be leveraged. In particular,~\cite{Beck2012} considered minimizing the smooth convex relaxation
    $$ \min_{y\in\mathbb{R}^n} \ \left(\frac{\epsilon}{2\delta}f\right) \circ \left(\frac{2\delta}{\epsilon} g\right)\left(y\right)$$
    for a $\delta$-smoothing $f$ of $\smax$. Since $f$ is 1-smooth with respect to the infinity norm and each individual $g_i$ is smooth and Lipschitz, the objective $y\mapsto \frac{\epsilon}{2\delta} f\circ \frac{2\delta}{\epsilon} g\left(y\right)$ above must be $(L+2\delta M^2/\epsilon)$-smooth with respect to the Euclidean norm in $\mathbb{R}^n$~\cite[Proposition 4.1]{Beck2012}. Further, it differs from the original objective by at most $\epsilon/2$. As a result, by applying an accelerated algorithm for smooth convex optimization like Nesterov's method~\cite{Nesterov1983method} to produce an $\epsilon/2$-minimizer, an $\epsilon$-minimizer to the original problem can be produced with a first-order oracle complexity of at most
    \begin{equation}\label{smoothed-rate}
       \sqrt{\frac{(L+2\delta M^2/\epsilon)\|y_0-y_\star\|^2}{\epsilon/2}} = \mathcal{O}\left(\frac{\sqrt{L\epsilon +\delta M^2}\|y_0-y_\star\|}{\epsilon}\right). 
    \end{equation}
    Note the acceleration in the dependence on $\epsilon$ in the above $\mathcal{O}(1/\epsilon)$ rate over the general $\Omega(1/\epsilon^2)$ limit. Hence, $\delta$-smoothings provide a direct reduction for accelerating structured nonsmooth optimization. Further, improvements in $\delta$ directly improve oracle complexities.

    \noindent {\it (ii) In neural network design,} especially modern networks using the attention/transformer mechanism~\cite{Vaswani2017}, it is often useful for the output of a layer in the network to be a vector of weights for a probability distribution. For example, this can correspond to the likelihood of outputting a given token.    
    That is, one would like a mapping $g\colon \mathbb{R}^d\rightarrow \Delta_d$ where $\Delta_d$ is the probability simplex in dimension $d$. Gradients $\nabla f \colon \mathbb{R}^d\rightarrow \Delta_d$ of convex smoothings of the max function provide such a construction with desirable properties like monotonicity and Lipschitzness. Hence, there has been substantial interest in the machine learning literature in such smoothings~\cite{Asadi2017,Martins2016}.
    Since the outputs of the gradient map are probabilities (living in the simplex $\Delta_d$), the typical norm to endow them with is the one-norm. Dually, the natural corresponding norm for the primal space is the infinity norm, as we study here. Note that by convexity, any $\delta$-smoothing $f$ has that
    \begin{equation}
        \langle \nabla f(x), x\rangle \geq \smax(x) - 2\delta \qquad \forall x\in\mathbb{R}^d \label{eq:Expectation-Guarantee}
    \end{equation}
    where $\langle\cdot,\cdot\rangle$ denotes the standard Euclidean inner product,
    i.e., the expected value of sampling an element of $x$ with probabilities $\nabla f(x)$ concentrates near the vector's maximum value. See~\cite{Epasto2020} for a discussion of the potential importance of gains in $\delta$.

    For the task of smoothing in the infinity norm, in either of these disciplines, the LogSumExp function has emerged as the canonical standard choice. The fundamental nature of this choice is best seen by considering a dual perspective. Generally, convex conjugates $f^*(x) = \sup_{\lambda\in\mathbb{R}^d} \langle \lambda, x\rangle - f(\lambda)$ provide a duality between smooth convex functions and strongly convex functions. A Nesterov-style smoothing~\cite{Nesterov2005} of the max function is given by such a conjugate as
    \begin{equation}
        f(x) = \sup_{\lambda\in\Delta_d} \langle \lambda, x\rangle - h(\lambda) \label{eq:Nesterov-smoothing}
    \end{equation}
    for any $1$-strongly convex ``distance'' function $h$ on $\Delta_d$ in the one-norm. From this point of view, designing a smoothing corresponds to designing a strongly convex function on the simplex. If one takes $h(\lambda) = \sum_{i=1}^d \lambda_i\ln(\lambda_i)$ as the entropy function, a well-known strongly convex ``distance'' for the probability simplex, then the resulting smoothing $f$ is exactly $f_\mathtt{LSE}$. Indeed, this development is at the origin of the LogSumExp function's usage in nonsmooth optimization~\cite{Nesterov2005}.
    Despite their ubiquity, the fundamental boundaries of what smoothings can attain remain underexplored.

    In this work, we derive limits on the process of designing smooth approximations of the coordinate-wise max function. That is, we seek to derive lower bounds on the infinite-dimensional optimization problem of finding the best $\delta$-smoothing in $d$ dimensions
    $$ \delta^\star(d) \defeq \begin{cases} \min_f &\|f-\smax\|_\infty\\
        \mathrm{s.t.} & f\colon \mathbb{R}^d\rightarrow \mathbb{R} \text{ is convex and $1$-smooth in $\|\cdot\|_\infty$}. \end{cases} $$
    Equivalently, one could consider optimizing only over overestimators of $\smax$, as $f_\mathtt{LSE}$ does, denoted
    $$ \delta^\star_\mathtt{over}(d) \defeq \begin{cases} \min_f &\|f-\smax\|_\infty\\
        \mathrm{s.t.} & f\colon \mathbb{R}^d\rightarrow \mathbb{R} \text{ is convex and $1$-smooth in $\|\cdot\|_\infty$}\\
        & f(x) \geq \smax(x) \ \forall x\in\mathbb{R}^d. \end{cases} $$
    By adding simple shifts, it is immediate that $\delta^\star(d) = \delta^\star_\mathtt{over}(d)/ 2$. The existing results for the LogSumExp function $f_\mathtt{LSE}$  discussed above then establish an upper bound for this problem of $\delta^\star(d) \leq 0.5 \ln(d)$ for all $d$. We consider the complementary question of lower bounds.

    \subsection{Our Contributions}
    A lower bound on $\delta^\star(d)$ of $\ln(d)/8$ can be extracted from two related lower bounds in the literature.
    Epasto et al.~\cite{Epasto2020} provided a lower bound via limits on any Lipschitz mappings satisfying~\eqref{eq:Expectation-Guarantee}. Their approach relies on a game-theoretic argument establishing the existence of a Nash equilibrium from an application of Glicksberg's theorem and a careful compactification approach.
    \begin{proposition}[Theorem 4.4, Epasto et al.~\cite{Epasto2020}]\label{prop:epasto}
        For any dimension $d=2^{2^k}$ for an integer $k\geq 0$, one has
        $$ \delta^\star(d) \geq \frac{1}{8}\ln(d).$$
    \end{proposition}
    \noindent Alternatively, via a reduction to a statistical prediction game of~\cite{Cesa1997} in online learning, one can derive an asymptotically matching lower bound for all $d$ (proven in Appendix~\ref{app:stat_reduction} for completeness).
    \begin{proposition} \label{prop:reduction_result}
        For any dimension $d\geq 2$, one has
        $$ \delta^\star(d) \geq  \left(\frac{1}{8} + o(1)\right)\ln(d).$$
    \end{proposition}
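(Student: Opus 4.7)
The plan is to reduce lower-bounding $\delta^\star(d)$ to the classical regret lower bound for prediction with expert advice, exploiting the fact that any $\delta$-smoothing yields, via a Follow-the-Regularized-Leader-style construction, an online algorithm whose expected gap to the best expert is $O(\sqrt{\delta T})$. Concretely, given a $\delta$-smoothing $f$, a stepsize $\eta > 0$, and arbitrary vectors $g_1, \dots, g_T \in \mathbb{R}^d$ with $\|g_t\|_\infty \leq 1$, define $G_t = \sum_{s \leq t} g_s$ and $p_t = \nabla f(\eta G_{t-1})$. The 1-smoothness of $f$ in $\|\cdot\|_\infty$ implies (via H\"older's inequality and integration) the Bregman-type upper bound $f(y) \leq f(x) + \langle \nabla f(x), y-x\rangle + \tfrac12\|y-x\|_\infty^2$; telescoping this along $x = \eta G_{t-1}$, $y = \eta G_t$ and invoking $f(\eta G_T) \geq \eta\,\smax(G_T) - \delta$ together with $f(0) \leq \delta$ yields
\begin{equation*}
    \eta\,\smax(G_T) - \eta \sum_{t=1}^T \langle p_t, g_t\rangle \;\leq\; 2\delta + \frac{\eta^2 T}{2}.
\end{equation*}

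Next, I would take the $g_t$ to be i.i.d.\ uniform on $\{-1,+1\}^d$. Since $p_t$ is measurable with respect to $g_1,\dots,g_{t-1}$ and $\mathbb{E}[g_t]=0$, the cross term vanishes in expectation. Optimizing $\eta = 2\sqrt{\delta/T}$ then gives $\mathbb{E}[\smax(G_T)] \leq 2\sqrt{\delta T}$. Letting $T \to \infty$ with $d$ fixed, the central limit theorem together with uniform integrability (valid since $\smax(G_T)/\sqrt T$ is bounded in $L^2$ by $\sqrt d$) upgrades convergence in distribution to convergence in mean, so $\mathbb{E}[\max_i Z_i] \leq 2\sqrt{\delta}$ where $Z_1,\dots,Z_d$ are i.i.d.\ standard Gaussian; equivalently, $\delta^\star(d) \geq \tfrac14 \bigl(\mathbb{E}[\max_i Z_i]\bigr)^2$.

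The final step is to lower-bound $\mathbb{E}[\max_i Z_i] \geq (1-o(1))\sqrt{\ln(d)/2}$ as $d \to \infty$ via an elementary tail argument. The Mills-ratio inequality $\mathbb{P}(Z > t) \geq \phi(t)/(t + 1/t)$ applied at $t=\sqrt{\ln(d)/2}$ yields $\mathbb{P}(Z > t) \gtrsim d^{-1/4}/\sqrt{\ln d}$, so $\mathbb{P}(\max_i Z_i \leq t) \leq \exp(-\Omega(d^{3/4}/\sqrt{\ln d})) \to 0$; the contribution of the super-exponentially unlikely event $\{\max_i Z_i < 0\}$ can be absorbed via Cauchy-Schwarz. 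Combining with the preceding bound gives $\delta^\star(d) \geq (1/8 + o(1))\ln d$. The main obstacle is this final tail step: the sharp extreme-value asymptotic $\mathbb{E}[\max_i Z_i] \sim \sqrt{2\ln d}$ would in fact sharpen the overall constant to $1/2$, but obtaining it cleanly requires more delicate analysis, so the looser $1/\sqrt 2$ here is natural for an elementary baseline presented in parallel with the paper's stronger $\sim 0.8145$ construction.
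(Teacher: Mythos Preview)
Your proof is correct and follows the same high-level reduction to online learning as the paper, but your execution is more direct and self-contained. The paper first passes to the dual formulation $\delta^\star(d)=\tfrac12\min_h\mathrm{Range}(h)$ over $1$-strongly convex $h$ on the simplex, then combines two black-box citations: Cesa-Bianchi et al.'s regret lower bound $(1+o(1))\sqrt{T\ln(d)/2}$ for $\{0,1\}$ coin flips, and Shalev-Shwartz's FTRL upper bound $\sqrt{2\,\mathrm{Range}(h)\,T}$. You instead work primally with $f$ itself, deriving the regret upper bound $2\sqrt{\delta T}$ from the descent lemma and the lower bound from i.i.d.\ Rademacher gains and the CLT, avoiding the conjugacy detour entirely.

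The more substantive difference is your choice of $\pm 1$ rather than $\{0,1\}$ gains: both satisfy $\|g_t\|_\infty\le 1$ (so the same $2\sqrt{\delta T}$ upper bound applies), but yours have four times the coordinate variance, so the limiting expectation is $\mathbb{E}[\max_i Z_i]$ for i.i.d.\ standard Gaussians rather than the half-scale version implicit in the coin-flip setup. As you yourself note, the sharp asymptotic $\mathbb{E}[\max_i Z_i]\sim\sqrt{2\ln d}$ would then give $\delta^\star(d)\ge(\tfrac12-o(1))\ln d$, matching the LogSumExp upper bound and strictly improving even the paper's main Theorem constant $\approx 0.407$. Your decision to apply the Mills-ratio bound at the loose threshold $t=\sqrt{\ln(d)/2}$ is therefore puzzling: the identical argument at $t=(1-\epsilon)\sqrt{2\ln d}$ gives $\mathbb{P}(Z>t)\gtrsim d^{-(1-\epsilon)^2}/\sqrt{\ln d}$ and hence $\mathbb{P}(\max_i Z_i\le t)\to 0$ just as easily, so nothing forces the weaker constant. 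In short, your framework actually proves more than the proposition asks---it pins down the asymptotic exactly---and the restriction to $1/8$ is artificial.
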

    
    These bounds leave a factor of four gap with the LogSumExp upper bound of $0.5\ln(d)$ and rely on nontrivial equilibrium existence and statistical arguments, respectively. As our main result, we further improve on this, deriving stronger lower bounds directly via an elementary constructive approach relying only on classical inequalities for smooth convex functions and a combinatorial recurrence. {\color{blue}At its core, our analysis works inductively by considering elementary inequalities between certain sparse points $x^{(j_{0})}, x^{(j_{1})}, \dots, x^{(j_{k})} \in \R^d$, each with nonzero components only in the first $j_\ell$ entries, for certain subsets $\{j_0, j_1, \dots, j_{k-1}, j_k\} \subseteq \{1, \dots, d\}$. For each such subset, we derive a lower bound. Optimizing over these bounds, we arrive at the following maximal partition sum quantity
    \begin{equation} \label{eq:gamma-definition}
    \gamma(d) \defeq \begin{cases} \max\limits_{\{j_0, j_1, \dots, j_{k-1}, j_k\} \subseteq \{1, \dots, d\}} &\sum_{\ell=1}^{k} \left(1-\frac{j_{\ell-1}}{j_\ell}\right)^2\\
        \mathrm{s.t. }& 1=j_0< j_1<\dots< j_{k-1}< j_k = d.\end{cases}
    \end{equation}
    Section~\ref{subsec:structured-proofs} shows this quantity can be understood as optimizing over a structured family of proofs to build the strongest lower bound.
    }
    
    Our main result shows that this provides a lower bound on the optimal smoothing gap $\delta^\star(d)$. Our proof of Theorem~\ref{thm:main_result} in fact provides a stronger lower bound, applying to any $p$-norm smoothing.
    \begin{theorem}\label{thm:main_result}
        For any dimension $d\geq 2$, one has
        $$ \delta^\star(d) \geq  \gamma(d).$$
        Further, $\lim_{d\rightarrow \infty} \frac{\gamma(d)}{\ln(d)} =2\beta(1-\beta) \geq 0.40726$ where $\beta$ is the unique root of $2\beta \ln \beta - \beta + 1$ in $(0,1)$. 
    \end{theorem}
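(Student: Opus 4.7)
The plan is to pass to the Fenchel dual. For any 1-smooth convex $f$ with $\|f - \smax\|_\infty \leq \delta$, the conjugate $h := f^*$ is $1$-strongly convex with respect to $\|\cdot\|_1$ by the classical smoothness--strong-convexity duality, and since $\smax^* = I_{\Delta_d}$ (the indicator of the probability simplex), the approximation hypothesis translates directly into $|h(\lambda)| \leq \delta$ for every $\lambda \in \Delta_d$. Averaging $f$ over coordinate permutations preserves 1-smoothness and the approximation gap (since $\smax$ is itself symmetric), so I may assume without loss of generality that $h$ is $S_d$-invariant.

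Given the partition $1 = j_0 < j_1 < \dots < j_k = d$, introduce the chain $u^{(\ell)} := (1/j_\ell)\1_{[j_\ell]} \in \Delta_d$. The central combinatorial observation is the averaging identity
\[
\EE_\tau\bigl[\tau u^{(\ell-1)}\bigr] \;=\; u^{(\ell)},
\]
where $\tau$ ranges uniformly over the subgroup $S_{j_\ell}$ permuting the first $j_\ell$ coordinates: each $i \in [j_\ell]$ lies in the random set $\tau([j_{\ell-1}])$ with probability $j_{\ell-1}/j_\ell$, contributing $1/j_\ell$ in expectation. A direct count also shows $\|\tau u^{(\ell-1)} - u^{(\ell)}\|_1 = 2(1 - j_{\ell-1}/j_\ell)$ uniformly in $\tau$.

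Taking the expectation over $\tau$ of the strong convexity inequality anchored at $u^{(\ell)}$, the linear term vanishes thanks to the averaging identity, and the $S_d$-symmetry of $h$ identifies $\EE_\tau h(\tau u^{(\ell-1)})$ with $h(u^{(\ell-1)})$, yielding the recurrence
\[
h(u^{(\ell-1)}) - h(u^{(\ell)}) \;\geq\; 2\left(1 - \frac{j_{\ell-1}}{j_\ell}\right)^{\!2}.
\]
Telescoping from $\ell = 1$ to $k$ gives $h(u^{(0)}) - h(u^{(k)}) \geq 2\sum_{\ell}(1 - j_{\ell-1}/j_\ell)^2$; since $|h| \leq \delta$ on $\Delta_d$, the left side is at most $2\delta$, so $\delta \geq \sum_\ell (1 - j_{\ell-1}/j_\ell)^2$. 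Optimizing over partitions yields $\delta^\star(d) \geq \gamma(d)$.

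For the asymptotic claim, substitute $r_\ell := j_{\ell-1}/j_\ell$, so that the problem becomes maximizing $\sum (1 - r_\ell)^2$ subject to $\sum \ln r_\ell = -\ln d$ and $r_\ell \in (0,1)$. On the continuous relaxation, Lagrange multipliers force a common value $r_\ell \equiv r$ with $k = \ln d / \ln(1/r)$, reducing matters to maximizing $g(r) = (1-r)^2/\ln(1/r)$. Setting $g'(r) = 0$ simplifies to $2r \ln r - r + 1 = 0$, which has a unique root $\beta \in (0, 1)$; the relation $-\ln \beta = (1-\beta)/(2\beta)$ extracted from this critical equation then gives $g(\beta) = 2\beta(1-\beta)$, exceeding $0.40726$ by numerical evaluation. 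The integer maximum approaches this continuous value as $d \to \infty$ by taking $k = \lceil \ln d / \ln(1/\beta) \rceil$ and $j_\ell = \lfloor d^{\ell/k} \rfloor$. The main obstacle is the very first step: identifying the right chain $\{u^{(\ell)}\}$ and recognizing that its averaging identity $\EE_\tau [\tau u^{(\ell-1)}] = u^{(\ell)}$ is precisely what enables strong convexity to collapse into a sharp telescoping bound; the remainder is essentially routine algebra and elementary calculus.
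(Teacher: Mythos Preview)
Your approach is correct and takes a genuinely different route from the paper. The paper works entirely in the primal: it applies the co-coercivity inequality
$f(\alpha x^{(i)}) \ge f(\alpha x^{(j)}) + \langle\nabla f(\alpha x^{(j)}),\alpha x^{(i)}-\alpha x^{(j)}\rangle + \tfrac12\|\nabla f(\alpha x^{(i)})-\nabla f(\alpha x^{(j)})\|_1^2$
along the chain of scaled sparse points $\alpha x^{(j_\ell)}$, telescopes, and then sends $\alpha\to\infty$ to force $\nabla f(\alpha x^{(j)})\to x^{(j)}$, which is what manufactures the $\|x^{(j_\ell)}-x^{(j_{\ell-1})}\|_1^2$ terms. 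Your argument is the Fenchel dual of this: you work directly with the $1$-strongly convex $h=f^*$ on $\Delta_d$, and the permutation-averaging identity $\EE_\tau[\tau u^{(\ell-1)}]=u^{(\ell)}$ replaces the paper's limiting step, yielding the same telescoping recurrence in one stroke. Your route is shorter and arguably more transparent once one has the dual characterization (which the paper itself records as~\eqref{eq:dual-form-over-range}); the paper's primal route has the compensating advantage of never invoking conjugacy and of making explicit the points $\alpha x^{(j)}$ on which any smoothing must be loose.

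Two small technical points deserve care. First, the anchored strong-convexity inequality you invoke requires $\partial h(u^{(\ell)})\neq\emptyset$, but for $\ell<k$ the point $u^{(\ell)}$ sits on the boundary of $\Delta_d$; this is easily repaired by perturbing to $u^{(\ell)}_\epsilon=(1-\epsilon)u^{(\ell)}+\epsilon u^{(k)}\in\relint\Delta_d$ (the averaging identity and the $\ell_1$ distances are preserved up to a factor $1-\epsilon$) and letting $\epsilon\to 0$. Second, your Lagrange-multiplier reasoning for the asymptotic is heuristic, since you are \emph{maximizing} a sum of convex terms and the equal-ratio critical point need not be the maximum for fixed $k$; the clean fix is the pointwise bound $(1-r)^2\le 2\beta(1-\beta)\ln(1/r)$ for all $r\in(0,1)$ (i.e., $g(r)\le g(\beta)$), which summed over $\ell$ immediately gives $\gamma(d)\le 2\beta(1-\beta)\ln d$. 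The paper handles the asymptotic differently, via the recursion $\gamma(d)=\max_{1\le i<d}\{\gamma(i)+(1-i/d)^2\}$ and an induction that sandwiches $\gamma(d)$ between $2\beta(1-\beta)\ln d - 2(d-1)/d$ and $2\beta(1-\beta)\ln d$.
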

    
    Given the $0.5\ln(d)$ upper bound due to the LogSumExp function, this theorem determines the exact optimal gap down asymptotically to a narrow numerical range of $[0.40726, 0.5]$.
    The closeness of these bounds provides a case for the strong performance of the LogSumExp smoothing: It leaves at most a factor of 20\% ``on the table'' in terms of performance. 
    
    However, although nearly matching our bound, $f_\mathtt{LSE}$ fails to attain our lower bound. As a final result, we examine the tightness of our elementary lower bound. {\color{blue} For any dimension $d$, we provide a new smoothing, strictly outperforming $0.5\ln(d)$.} This disproves the optimality of $f_\mathtt{LSE}$ for {\color{blue} every} $d$. In the case of $d=2,3$, our lower bound construction via $\gamma(d)$ is exactly tight, proven by an exactly optimal smoothing attaining $\delta^\star(d)$.
    \begin{theorem}\label{thm:exact_low_dim}
        {\color{blue} For any dimension $d\geq 2$, one has $\delta^\star(d)< 0.5 \ln(d)$.} For $d=2$ and $d=3$, the optimal smoothing value is exactly $\delta^\star(d) = \gamma(d)$.
    \end{theorem}
    The use of such further optimized smoothings offers constant gains in worst-case guarantees for accelerated nonsmooth optimization directly by the previously discussed reduction to smooth optimization techniques of~\cite{Beck2012}. Smoothings attaining our theoretical lower bound saturate what is possible by such reductions, meaning any further progress in accelerated nonsmooth optimization will require approaches distinct from existing smoothing frameworks.
    Resolving the exact optimal smoothing $f$ for $d\geq 4$ is left as an interesting future direction. 

    \subsection{Related Works}

    \noindent {\bf Constructions of Smoothings.} Moreau envelopes~\cite{Moreau1965} are perhaps the most classic form of smoothing in optimization. General recipes for constructing smoothings have been extensively explored. Nesterov~\cite{Nesterov2005} does so via dualizations of the form~\eqref{eq:Nesterov-smoothing}. Beck and Teboulle~\cite{Beck2012} provided an alternative asymptotic smoothing approach applicable for smoothing any sublinear function. Epasto et al.~\cite{Epasto2020} proposed a general piecewise linear form for constructing gradients of a smoothing of the max function, yielding a piecewise quadratic family of $f$. In areas of machine learning, alternatives to the LogSumExp/softmax function have been of recent interest. The sparsemax~\cite{Martins2016}, defined by projections onto the simplex and corresponding to the Moreau envelope of the max function, typically has sparse gradients, which can improve computation and interpretability. In reinforcement learning, the mellowmax smoothing~\cite{Asadi2017}, defined as a translation of the LogSumExp, has found widespread interest.

    \noindent {\bf Prior Lower Bound Theory for Smoothings.} As discussed above, lower bound proofs can be extracted from the literature~\cite{Cesa1997,Epasto2020} via the game theoretic or statistical reductions in Propositions~\ref{prop:epasto} and~\ref{prop:reduction_result} respectively. Our work here circumvents the need for such heavy machinery by using entirely constructive, elementary proof techniques. Moreover, our tailored, direct approach yields tighter lower bounds, being exact for $d=2,3$ at least.

    If one instead considers optimal smoothings with respect to the two-norm, requiring instead that
    $$ \|\nabla f(x) - \nabla f(y)\|_2 \leq \|x-y\|_2 \qquad \forall x,y\in \mathbb{R}^d,$$
    then a theory for exactly optimal smoothings of general sublinear functions was given by the authors in~\cite{samakhoana2025optimal}. In this case, the optimal difference between a smooth convex $f$ and $\smax$ is bounded by a constant for all $d$ (in particular being at most $1/4$, see~\cite[Section 4.2.3]{samakhoana2025optimal}). As a result, $f_\mathtt{LSE}$ is fundamentally suboptimal not just by constants but in a big-O for two-norm smoothing.

    \noindent {\bf Complexity Lower Bound Theory for Optimization.} Often, the complexity theory for first-order methods is independent of the ambient problem dimension $d$. As exceptions to this, Guzman and Nemirovski~\cite{Guzman2015} showed a log term for H\"older smooth convex optimization over $p$-norm balls outside the classical Euclidean setting. Carmon and Hinder~\cite{Carmon2025} showed log factors in problem dimension are a fundamental price of adaptivity for stochastic optimization when certain problem parameters are unknown. For the particular setting of minimizing a finite maximum considered here, our Theorem~\ref{thm:main_result} establishes a smoothing barrier: no smoothing-based reduction, i.e.~\eqref{smoothed-rate}, can have a dimension-independent complexity or any dependence better than $\mathcal{O}(\sqrt{\ln(d)})$.
    
    In online convex optimization, algorithms often must have regret scale at least with $\Omega(\sqrt{\ln(d)})$. This is proven by a reduction to coin flipping games~\cite{Cesa1997} and attained up to constant factors by approaches using entropic regularization, dual to the LogSumExp function. See the surveys~\cite[Section 2.5]{Shalev2012} and~\cite[Section 7]{Orabona2019AMI}. Our proof of Proposition~\ref{prop:reduction_result} reduces to this prior art.

    \noindent {\bf Generalization to the Maximum Eigenvalue Function.} Lastly, we note our theory immediately provides lower bounds on smoothings of the maximum eigenvalue function $\sigma_\lambda(X) = \lambda_\mathtt{max}(X)$ defined for all symmetric matrices $X\in\mathbb{R}^{d\times d}$ with respect to the spectral norm. This follows by noting that for any $x\in\mathbb{R}^d$, $\sigma_\lambda(\mathrm{diag}(x)) = \smax(x)$. As a result, any spectral-norm $1$-smoothing of $\sigma_\lambda$ must differ by at least $\gamma(d)$ somewhere. This is the optimal order as $f(X) = \ln(\sum_{i}^d \exp(\lambda_i(X)))$ provides a 1-smooth overestimator of $\sigma_\lambda$ differing by at most $\ln(d)$. This generalization of the $f_\mathtt{LSE}$ smoothings has played a role in solving semidefinite programs, for example, using the radial framework of Renegar~\cite{Renegar2019} and Grimmer~\cite{Grimmer2021-part2}.

    {\color{blue}
    \subsection{The Lower Bound $\gamma(d)$ as Optimization over Structured Proofs}\label{subsec:structured-proofs}

    Before formally proving Theorem~\ref{thm:main_result}, here we provide an informal progression of ideas leading to $\gamma(d)$ as a plausible candidate lower bound on $\delta^\star(d)$. To build lower bounding inequalities, we consider certain inequalities which must be satisfied for any function that is convex and $1$-smooth in $\|\cdot\|_\infty$. For a differentiable function $f$, these two properties are equivalent to the following being nonnegative for every $x,y\in\mathbb{R}^d$,
    \begin{equation*}
    Q_{x,y} \defeq f(x) - f(y) -\ip{\nabla f(y), x-y} - \frac{1}{2}\|\nabla f(x) - \nabla f(y)\|^2_1 \geq 0.
    \end{equation*}
    See~\cite[Theorem 5.8]{beck2017first}. We aim to construct our lower bound by considering well-chosen combinations of these inequalities at various points $x$ and $y$.

    In particular, we aim to show that some points $y^{(d)}$ and $y^{(1)}$ must have a large difference in $f$ values (larger than $\smax$). This suffices to provide a lower bound as
    $$ \left[f(y^{(d)}) - f(y^{(1)})\right] - \left[\smax(y^{(d)}) - \smax(y^{(1)})\right] \leq 2\|f-\smax\|_\infty.$$
    So showing $\left[f(y^{(d)}) - f(y^{(1)})\right] - \left[\smax(y^{(d)}) - \smax(y^{(1)})\right] - 2\gamma \geq 0$ would prove $\|f-\smax\|_\infty\geq \gamma$.
    
    We prove such a nonnegativity result by introducing intermediate points $y^{(1)},\dots,y^{(d)}$ between which we will consider the nonnegative $Q_{\cdot,\cdot}$ quantities. In particular, we restrict attention to scaled points $y^{(j)} = \alpha x^{(j)}$ with $x^{(j)}= (1/j,\dots,1/j,0,\dots,0)$, having $j$ nonzero entries followed by zeros. In our formal proof below, we will take a limit as $\alpha$ tends to infinity, but for motivating our approach, considering a fixed large value will suffice. For ease, denote $ Q_{y^{(i)},y^{(j)}}$ by $Q_{i,j}$.

    Given these points, one may hope for a proof of the target nonnegativity that selects nonnegative multipliers $\lambda_{i,j}$ for each $i>j$ such that the following identity holds
    \begin{equation} \label{eq:target-identity}
         \sum_{i>j} \lambda_{i,j} Q_{i,j} = \left[f(y^{(d)}) - f(y^{(1)})\right] - \left[\smax(y^{(d)}) - \smax(y^{(1)})\right] - 2\gamma
    \end{equation}
    for any values of $f(\cdot)$ and $\nabla f(\cdot)$ at the points $y^{(j)}$. Since the left-hand side is a sum of nonnegative quantities, this identity would establish the right-hand side is nonnegative.
    Optimizing over such proofs, the strongest lower bound is given by the $\lambda$ yielding the maximum value of $\gamma$.

    As a further simplification, one may guess that the gradients at the points $y^{(j)}$, for $\alpha$ large enough, converge towards $(1/j,\dots,1/j,0,\dots,0)$. Indeed, in~\eqref{eq:gradient-derivation}, we will find this can be taken without loss via a symmetry argument. Under this asymptotic simplification, we have for $i>j$ that
    \begin{equation*}
    Q_{i,j} = \left[f(y^{(i)})-\smax(y^{(i)})\right] - \left[f(y^{(j)})-\smax(y^{(j)})\right] - 2\left(1-\frac{j}{i}\right)^2.
    \end{equation*}

    After fixing the values of $y^{(j)}$ and (via asymptotic reasoning) values of gradients $\nabla f(y^{(j)})$, both sides of the identity~\eqref{eq:target-identity} are affine in $f_j = f(y^{(j)})$. So a collection of multipliers $\lambda$ satisfies~\eqref{eq:target-identity} if and only if the coefficients agree on each term: Namely, for the coefficients on $f_j$ to match, we need
    \begin{align*}
        \sum_{i=1}^{j-1} \lambda_{j,i} - \sum_{i=j+1}^d \lambda_{i,j} &= \begin{cases} 
            1 & \text{if } j=d, \\
            -1 & \text{if } j=1, \\
            0 & \text{otherwise},
        \end{cases}
    \end{align*}
    and then, for the constant terms to match,
    \begin{align*}        
        \sum_{i>j} \lambda_{i,j} \left(1-\frac{j}{i}\right)^2 &= \gamma.
    \end{align*}
    
    Consequently, calculation of the strongest lower bound, proven by this structured template, amounts to optimizing over a polyhedron: maximize the induced value of $\gamma$ over the set of nonnegative $\lambda$ with the needed identity. This polyhedron can be viewed combinatorially as routing one unit of flow from $1$ to $d$ with $\lambda_{i,j}$ flowing from $j$ to $i$. Our definition of $\gamma(d)$ in~\eqref{eq:gamma-definition} is a discrete formulation, where each partition $1=j_0< j_1<\dots< j_{k-1}< j_k = d$ corresponds to the extreme point with $\lambda_{j_{\ell+1},j_{\ell}}=1$ for $\ell=0,\dots k-1$ and all other multipliers equal to zero.

    From Theorems~\ref{thm:main_result} and~\ref{thm:exact_low_dim}, this lower bounding approach is quite strong, being within a small factor of the known upper bound and exact for $d=2$ and $d=3$. Preliminary numerics with $d= 4$ using sample points beyond $y^{(1)},\dots,y^{(4)}$ may provide stronger lower bounds. However, for such selections, we cannot use symmetry to determine exact gradients, making the resulting optimization much less tractable. We expect tighter bounds to follow from identifying new sample points with tractable proof-optimizations.
    }

    \section{Proof of Theorem~\ref{thm:main_result} -- An Elementary Lower Bound} \label{sec:proof-main}

    {\color{blue}We prove the first result of $\delta^\star(d) \geq \gamma(d)$ more generally than the case of $\infty$-norms of interest to this paper, deriving a bound for any $p$-norm smoothing, where $p \geq 1$. Letting $q$ denote the dual norm exponent (i.e., $1/p+1/q = 1$), a function is $1$-smooth with respect to $\|\cdot\|_p$ if $\|\nabla f(x) - \nabla f(y)\|_q \leq \|x-y\|_p$. Correspondingly, for any $1 \leq p \leq \infty$, define
    $$ \delta^\star_p(d) \defeq \begin{cases} \min_f &\|f-\smax\|_\infty\\
        \mathrm{s.t.} & f\colon \mathbb{R}^d\rightarrow \mathbb{R} \text{ is convex and $1$-smooth in $\|\cdot\|_p$} \end{cases}$$
    and for $p > 1$, define
    $$ \gamma_p(d) \defeq \begin{cases} \max & \sum_{\ell = 1}^{k}\frac{1}{4}\left(j_{\ell-1}\left(\frac{1}{j_{\ell-1}} - \frac{1}{j_{\ell}}\right)^q + (j_{\ell} - j_{\ell-1})\left(\frac{1}{j_{\ell}}\right)^q\right)^{2/q}\\
        \mathrm{s.t. }& 1=j_0< j_1<\dots< j_{k-1}< j_k = d\end{cases} $$
    with $\gamma_1(d)$ defined by
    $$
    \gamma_1(d) \defeq \begin{cases} \max & \sum_{\ell = 1}^{k}\frac{1}{4}\left(\max\left\{\tfrac{1}{j_{\ell -1}} - \tfrac{1}{j_{\ell}}, \tfrac{1}{j_{\ell}} \right\}\right)^{2}\\
        \mathrm{s.t. }& 1=j_0< j_1<\dots< j_{k-1}< j_k = d\end{cases}
    $$
    Notice that $\gamma_p(d)$ converges monotonically to $\gamma_1(d)$ and to $\gamma_\infty(d)=\gamma(d)$ as $p$ tends to one and infinity, respectively, (and so conversely, $q$ tends to infinity and one).
    }
    As a first observation, we note that without loss of generality, it suffices to restrict our attention to functions $f$ that are invariant under permutations. That is, functions with $f(x)=f(Px)$ for all $x\in\mathbb{R}^d$ and $P\in\Pi_d$ where $\Pi_d$ is the set of all $d\times d$ permutation matrices.
    \begin{lemma}\label{lem: convexityAndPermInvarianceIsHarmless}
        For any dimension $d\geq 2$, one has
        $$ \delta^\star_p(d) = \hat \delta^\star_p(d) \defeq \begin{cases} \min_f &\|f-\smax\|_\infty\\
        \mathrm{s.t.} & f\colon \mathbb{R}^d\rightarrow \mathbb{R} \text{ is convex and $1$-smooth in $\|\cdot\|_p$}\\
        & f(x) = f(Px) \ \forall x\in\mathbb{R}^d, P\in\Pi_d.
        \end{cases}$$
    \end{lemma}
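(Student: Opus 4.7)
The plan is a standard symmetrization argument: starting from any feasible $f$ for $\delta^\star(d)$, average it over the permutation group to produce a permutation-invariant $\tilde f$ with no worse objective value and the same smoothness/convexity. Since $\hat\delta^\star(d) \geq \delta^\star(d)$ is immediate from the fact that $\hat\delta^\star(d)$ is defined by a more constrained minimization, only the reverse inequality requires work.

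Concretely, I would fix any convex, $1$-smooth (in $\|\cdot\|_\infty$) function $f\colon\mathbb{R}^d\to\mathbb{R}$ with $\|f-\smax\|_\infty < \infty$ and define
\[
  \tilde f(x) \;\coloneqq\; \frac{1}{d!}\sum_{P \in \Pi_d} f(Px).
\]
Then I would verify four properties of $\tilde f$ in turn:
\begin{enumerate}
\item \textbf{Permutation invariance.} For any $Q \in \Pi_d$, the change of variables $P \mapsto PQ^{-1}$ on the summation index shows $\tilde f(Qx) = \tilde f(x)$.
\item \textbf{Convexity.} Each $x \mapsto f(Px)$ is convex (composition of convex $f$ with a linear map), so the average is convex.
\item \textbf{Smoothness in $\|\cdot\|_\infty$.} From $\nabla \tilde f(x) = \frac{1}{d!}\sum_P P^\top \nabla f(Px)$, the triangle inequality gives
\[
  \|\nabla\tilde f(x) - \nabla\tilde f(y)\|_1 \leq \frac{1}{d!}\sum_P \|P^\top(\nabla f(Px)-\nabla f(Py))\|_1 = \frac{1}{d!}\sum_P \|\nabla f(Px)-\nabla f(Py)\|_1,
\]
where the equality uses that permutation matrices preserve the $1$-norm. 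Applying $1$-smoothness of $f$ to each term and then using that permutations preserve the $\infty$-norm bounds this by $\|x-y\|_\infty$.
\item \textbf{Approximation error.} Since $\smax$ is itself permutation invariant, $\smax(Px) = \smax(x)$, so for any $x$,
\[
  |\tilde f(x) - \smax(x)| \;=\; \Bigl|\tfrac{1}{d!}\sum_P (f(Px) - \smax(Px))\Bigr| \;\leq\; \|f-\smax\|_\infty.
\]
\end{enumerate}

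Taking the infimum over $f$ on the right-hand side yields $\hat\delta^\star(d) \leq \delta^\star(d)$, which combined with the reverse inclusion of feasible sets gives equality.

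None of the steps is a genuine obstacle; the only place to be careful is the smoothness verification, where one must use that both the infinity norm and the $1$-norm (its dual) are invariant under permutations, so that the averaging does not inflate the Lipschitz constant of the gradient. The argument would break if the chosen norm were not permutation invariant, but the infinity norm case is exactly the setting here.
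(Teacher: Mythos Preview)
Your proposal is correct and follows essentially the same symmetrization argument as the paper: average $f$ over the permutation group to obtain a permutation-invariant function that remains convex and $1$-smooth in $\|\cdot\|_\infty$ and has no larger deviation from $\smax$. Your write-up is slightly more explicit than the paper's (you spell out the permutation-invariance of $\tilde f$ and the gradient computation for smoothness), but the underlying idea is identical.
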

    \begin{proof}[Proof of Lemma~\ref{lem: convexityAndPermInvarianceIsHarmless}]
        Clearly $\delta^\star_p(d) \leq \hat \delta^\star_p(d)$ as it has further constrained the search for an optimal smoothing. We establish the reverse direction by showing that for any feasible $f$ for $\delta^\star_p(d)$, a permutation invariant, convex, $1$-smooth in the $p$-norm $\hat f$ can be constructed that is no farther from $\smax$.
        To this end, consider any convex, $1$-smooth function $f$ with respect to $\|\cdot\|_p$. 
        Define
        $$\hat f(x) =  \frac{1}{d!}\sum_{P \in \Pi_d} f(Px).$$
        It is clear that for each $P \in \Pi_d$, the map $x \mapsto f(Px)$ is convex and $1$-smooth in the $p$-norm.\footnote{Applying the chain rule and the simple observation $\norm{P}_{p \to p} = 1$ gives the smoothness result.} As a result, $\hat f$ is convex and $1$-smooth in the $p$-norm. Finally, we establish $\hat{f}$ is no farther from $\smax$ than $f$ is as
        \begin{align*}
            \|f - \smax\|_{\infty} = \sup_{x \in \R^d} |f(x) - \smax(x)| &= \sup_{x \in \R^d}\sup_{P \in \Pi_d} |f(Px) - \smax(Px)| \\
            &\geq \sup_{x \in \R^d}\left|\frac{1}{d!}\sum_{P \in \Pi_d} f(Px) - \smax(Px) \right|\\
            &= \|\hat f - \smax\|_{\infty}
        \end{align*}
        where in the very last equality we have used the permutation invariance of $\smax$.
    \end{proof}
    
    Next, we show that the subdifferential $\partial f(x) \defeq \{\zeta \mid f(y) \geq f(x) + \langle \zeta, y-x\rangle \ \forall y\}$ of any convex approximator must lie in the simplex $\simplex{d} \subset \R^d$.
    \begin{lemma}\label{lem: subgradientsInSimplex}
        Let $f: \R^d \to \R$ be convex. If $\|f - \smax\|_{\infty} < \infty$, then for any $x \in \R^d$ and $\zeta \in \partial f(x)$, it holds that $\smax(y) \geq \ip{\zeta, y}$ for all $y \in \R^d$. In particular, $\partial f (x) \subseteq \simplex{d}$ for all $x \in \R^d$.
    \end{lemma}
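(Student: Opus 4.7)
The plan is to combine the subgradient inequality with the boundedness of $f - \smax$ and the positive homogeneity of $\smax$ in order to transfer the subgradient inequality from $f$ to $\smax$, and then to specialize to a few carefully chosen test points.

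First, fix $x \in \R^d$ and $\zeta \in \partial f(x)$. By definition of the subgradient, $f(y) \geq f(x) + \ip{\zeta, y-x}$ for all $y$. I would set $y = x + t z$ for arbitrary $z \in \R^d$ and $t > 0$, giving $\ip{\zeta, z} \leq (f(x + tz) - f(x))/t$. Let $M \defeq \|f - \smax\|_\infty < \infty$ be the finite uniform error. Then I can sandwich the right-hand side in terms of $\smax$:
\[
\frac{f(x + tz) - f(x)}{t} \;\leq\; \frac{\smax(x + tz) - \smax(x) + 2M}{t}.
\]
By the positive homogeneity of $\smax$, $\smax(x + tz)/t = \smax(x/t + z) \to \smax(z)$ as $t \to \infty$, while $\smax(x)/t$ and $2M/t$ both vanish. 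Letting $t \to \infty$ therefore yields $\ip{\zeta, z} \leq \smax(z)$ for every $z \in \R^d$, which is the first claim of the lemma.

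For the ``in particular'' statement, I would plug in three specific test vectors (assuming $d \geq 2$, as in the paper). Taking $z = -e_i$ for each coordinate $i$ gives $-\zeta_i \leq \smax(-e_i) = 0$, so $\zeta_i \geq 0$. Taking $z = \mathbf{1}$ gives $\sum_i \zeta_i \leq \smax(\mathbf{1}) = 1$, while $z = -\mathbf{1}$ gives $-\sum_i \zeta_i \leq \smax(-\mathbf{1}) = -1$, i.e., $\sum_i \zeta_i \geq 1$. Together these force $\zeta \in \Delta_d$.

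There is no real obstacle here; the only subtlety is making sure one actually uses positive homogeneity of $\smax$ (rather than an ad hoc Lipschitz estimate) in the limit $t \to \infty$, since that is precisely what allows the residual $\smax(x)/t$ term to be discarded and converts the inequality into one involving $\smax(z)$ itself. The rest is just evaluating $\smax$ on $\pm e_i$ and $\pm \mathbf{1}$.
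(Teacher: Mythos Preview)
Your proof is correct and follows essentially the same approach as the paper: both use positive homogeneity of $\smax$ together with the bounded gap $\|f-\smax\|_\infty$ to scale away the error and obtain $\ip{\zeta,z}\le\smax(z)$, then read off $\zeta\in\Delta_d$ from suitable test vectors. The only cosmetic difference is that the paper scales the target point (sending $\varepsilon\to 0$ in $\smax(y)=\varepsilon\,\smax(y/\varepsilon)$) rather than translating along a ray and sending $t\to\infty$, which lets it avoid the (trivial) appeal to continuity of $\smax$ that you use for $\smax(x/t+z)\to\smax(z)$.
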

    \begin{proof}[Proof of Lemma~\ref{lem: subgradientsInSimplex}]
        Let $\delta = \|f - \smax\|_{\infty}$ and fix $\zeta \in \partial f(x)$ for some arbitrary $x \in \R^d$. For any $y \in \R^d$ and any $\varepsilon > 0$, we have
        \begin{align*}
            \smax(y) = \varepsilon \smax(y/\varepsilon) \geq \varepsilon [f(y/\varepsilon) - \delta] 
            & \geq \varepsilon [f(x) +\ip{\zeta, y/\varepsilon - x} - \delta] \\
            &= \ip{\zeta, y} + \varepsilon[f(x) - \ip{\zeta, x} - \delta]
        \end{align*}
        where the second inequality is the subgradient inequality. Taking the limit as $\varepsilon \to 0$ gives $\smax(y) \geq \ip{\zeta, y}$. Since $x$, $y$, and $\zeta \in \partial f(x)$ were all arbitrary, this proves the first statement. The second statement follows directly from the first.
    \end{proof}
    Now let $f\colon \mathbb{R}^d\rightarrow \mathbb{R}$ be a convex, permutation invariant function that is $1$-smooth in $\|\cdot\|_{p}$ and satisfies $\|f - \smax\|_{\infty} < \infty$. For $j = 1, \dots, d$, let $x^{(j)}\in\mathbb{R}^d$ denote the point $(1/j,\dots,1/j,0,\dots, 0)$ with first $j$ entries set equal to $1/j$. For any $\alpha > 0$ and $i, j = 1, \dots, d$, convexity and $p$-norm smoothness of $f$~\cite[Theorem 5.8]{beck2017first} implies 
    \begin{align}
    Q_{i,j}(\alpha) \defeq f(\alpha x^{(i)}) &- f(\alpha x^{(j)}) -\ip{\nabla f(\alpha x^{(j)}), \alpha x^{(i)} - \alpha x^{(j)}} \label{eq: QijDef}\\
    & - \frac{1}{2}\|\nabla f(\alpha x^{(i)}) - \nabla f(\alpha x^{(j)})\|^2_q \geq 0. \nonumber 
    \end{align}
    For $i > j \geq 1$, a direct calculation gives
    \begin{align}
        Q_{i,j}(\alpha)
        &= f(\alpha x^{(i)}) - f(\alpha x^{(j)}) - \alpha\ip{\nabla f(\alpha x^{(j)}), \frac{j}{i}x^{(j)} - x^{(j)}} \nonumber\\
        & \qquad - \frac{1}{2}\|\nabla f(\alpha x^{(i)}) - \nabla f(\alpha x^{(j)})\|^2_q  - \alpha \ip{\nabla f(\alpha x^{(j)}), x^{(i)} - \frac{j}{i}x^{(j)}}\nonumber\\
        &\leq f(\alpha x^{(i)}) - f(\alpha x^{(j)}) - \left(\frac{j}{i} - 1\right)\ip{\nabla f(\alpha x^{(j)}), \alpha x^{(j)}} \nonumber\\
        & \qquad - \frac{1}{2}\|\nabla f(\alpha x^{(i)}) - \nabla f(\alpha x^{(j)})\|^2_q \nonumber\\
        &= f(\alpha x^{(i)}) - \smax(\alpha x^{(i)}) - \left[f(\alpha x^{(j)}) - \smax(\alpha x^{(j)})\right] \nonumber\\
        & \qquad - \frac{1}{2}\|\nabla f(\alpha x^{(i)}) - \nabla f(\alpha x^{(j)})\|^2_q \nonumber\\
        & \qquad  - \left(1 -\frac{j}{i}\right)\left[\smax(\alpha x^{(j)}) - \ip{\nabla f(\alpha x^{(j)}), \alpha x^{(j)}}\right] \nonumber\\
        & \leq f(\alpha x^{(i)}) - \smax(\alpha x^{(i)}) - \left[f(\alpha x^{(j)}) - \smax(\alpha x^{(j)})\right] \label{eq: QijBound}\\
        & \qquad - \frac{1}{2}\|\nabla f(\alpha x^{(i)}) - \nabla f(\alpha x^{(j)})\|^2_q \nonumber
    \end{align}
    where the first inequality holds because $\ip{\nabla f(\alpha x^{(j)}), x^{(i)} - \frac{j}{i}x^{(j)}} \geq 0$ since the components of $x^{(i)} - \frac{j}{i}x^{(j)}$ are nonnegative and Lemma~\ref{lem: subgradientsInSimplex} ensures $\nabla f(\alpha x^{(j)}) \in \simplex{d}$, the second equality holds because $\frac{j}{i}\smax(\alpha x^{(j)}) = \alpha/i = \smax(\alpha x^{(i)})$, and the last inequality follows from Lemma~\ref{lem: subgradientsInSimplex} and the assumption that $i > j$. 
    Now fix an increasing sequence $j_0 = 1 < j_1 < \dots < j_k = d$. Note that 
    \begin{align*}
        0 
        &\leq \sum_{\ell = 1}^{k}Q_{j_{\ell}, j_{\ell - 1}}(\alpha)  \\
        & \leq f(\alpha x^{(d)}) - \smax(\alpha x^{(d)}) - [f(\alpha x^{(1)}) - \smax(\alpha x^{(1)})]\\
        & \qquad\qquad - \sum_{\ell = 1}^{k}\frac{1}{2}\|\nabla f(\alpha x^{(j_{\ell})}) - \nabla f(\alpha x^{(j_{\ell -1})})\|^2_q \\
        & \leq 2\|f - \smax\|_{\infty} - \sum_{\ell = 1}^{k}\frac{1}{2}\|\nabla f(\alpha x^{(j_{\ell})}) - \nabla f(\alpha x^{(j_{\ell -1})})\|^2_q
    \end{align*}
    where the first inequality is by \eqref{eq: QijDef}, the second inequality holds because of inequality~\eqref{eq: QijBound} and the fact that the terms  $f(\alpha x^{(j_{\ell})}) - \smax(\alpha x^{(j_{\ell})})$ and $\left[f(\alpha x^{(j_{\ell-1})}) - \smax(\alpha x^{(j_{\ell -1})})\right]$ telescope, while the last inequality follows directly from the definition of $\|f - \smax\|_{\infty}$. Rearranging and taking the limit as $\alpha$ tends to infinity gives
    \begin{align}
        \|f - \smax\|_{\infty} 
        & \geq \lim_{\alpha \to \infty}\sum_{\ell = 1}^{k}\frac{1}{4}\|\nabla f(\alpha x^{(j_{\ell})}) - \nabla f(\alpha x^{(j_{\ell -1})})\|^2_q. \label{eq: lowerBoundBeforeLimit}
    \end{align}
    Note that $\nabla f(Px) = P\nabla f(x)$ for all $P \in \Pi_d$ and $x \in \R^d$ because of the permutation invariance assumption. In particular, $\nabla f(\alpha x^{(j)}) = P\nabla f(\alpha x^{(j)})$ for all $P \in \Pi_d$ such that $P\alpha x^{(j)} = \alpha x^{(j)}$. Combining this with the fact that $\nabla f(\alpha x^{(j)}) \in \simplex{d}$, we get that $\nabla f(\alpha x^{(d)}) = x^{(d)}$ and
    \begin{equation}\label{eq: structureOfGradients}
        \nabla f(\alpha x^{(j)}) = \left(\lambda_j(\alpha), \dots, \lambda_j(\alpha), \frac{1 - j\lambda_j(\alpha)}{d - j}, \dots, \frac{1 - j\lambda_j(\alpha)}{d - j}\right) 
    \end{equation}
    for all $\alpha > 0$ and $j = 1, \dots, d-1$, where $\lambda_j(\alpha) \in [0, 1/j]$. Therefore $\lim_{\alpha \to \infty} \nabla f(\alpha x^{(d)}) = x^{(d)}$. Similarly, $\lim_{\alpha \to \infty} \nabla f(\alpha x^{(j)}) = x^{(j)}$ for all $j = 1, \dots, d-1$, as the following calculation shows:
    \begin{align}
        0 \leq \lim_{\alpha \to \infty} \frac{1}{j} - \lambda_j(\alpha) &= \lim_{\alpha \to \infty} \frac{1}{\alpha}\left(\frac{\alpha}{j} - \alpha \lambda_j(\alpha)\right) \nonumber \\
        &= \lim_{\alpha \to \infty} \frac{1}{\alpha}\left(\smax(\alpha x^{(j)}) - \ip{\nabla f(\alpha x^{(j)}), \alpha x^{(j)}}\right) \nonumber \\
        & \leq \lim_{\alpha \to \infty} \frac{1}{\alpha}\left(\|f - \smax\|_{\infty}+ f(\alpha x^{(j)}) - \ip{\nabla f(\alpha x^{(j)}), \alpha x^{(j)}}\right)  \nonumber \\
        & \leq \lim_{\alpha \to \infty} \frac{1}{\alpha}\left(\|f - \smax\|_{\infty} + f(0)\right) \nonumber\\
        & = 0 \label{eq:gradient-derivation}
    \end{align}
    where the first inequality uses $\lambda_j(\alpha) \in [0, 1/j]$, the second equality holds by equation~\eqref{eq: structureOfGradients} and the definition of $x^{(j)}$, the second inequality uses $\smax(\alpha x^{(j)}) - f(\alpha x^{(j)}) \leq \|f - \smax\|_{\infty}$, and the last inequality uses the subgradient inequality $f(0) \geq f(\alpha x^{(j)}) + \ip{\nabla f(\alpha x^{(j)}), 0 - \alpha x^{(j)}}$. {\color{blue}As a result, for all $1 \leq q \leq \infty$, we have
    \begin{align*}
        &\|f - \smax\|_{\infty}\\
        &\geq \sum_{\ell = 1}^{k}\frac{1}{4}\|\lim_{\alpha \to \infty}\nabla f(\alpha x^{(j_{\ell})}) - \lim_{\alpha \to \infty}\nabla f(\alpha x^{(j_{\ell -1})})\|^2_q\\
        &= \sum_{\ell = 1}^{k}\frac{1}{4}\|x^{(j_{\ell})} - x^{(j_{\ell -1})}\|^2_q \\
        &=
        \begin{cases}
           \sum_{\ell = 1}^{k}\frac{1}{4}\left(j_{\ell-1}\left(\frac{1}{j_{\ell-1}} - \frac{1}{j_{\ell}}\right)^q + (j_{\ell} - j_{\ell-1})\left(\frac{1}{j_{\ell}}\right)^q\right)^{2/q} & \text{if } 1 \leq q < \infty \\
           \sum_{\ell = 1}^{k}\frac{1}{4}\left(\max\left\{\tfrac{1}{j_{\ell -1}} - \tfrac{1}{j_{\ell}}, \tfrac{1}{j_{\ell}} \right\}\right)^{2} & \text{if } q = \infty 
        \end{cases}
    \end{align*}
    where the inequality follows from \eqref{eq: lowerBoundBeforeLimit}. Since the sequence $j_0 = 1 < j_1 < \dots < j_k = d$ was arbitrary, maximizing over the sequences gives $\|f - \smax\|_{\infty} \geq \gamma_p(d)$ for all $1 \leq p \leq \infty$ and for all $d$.
    Hence $\delta^\star_p(d) \geq \gamma_p(d)$ for all $d$. When $p=\infty$ and $q=1$, the objective reduces to $\sum_{\ell = 1}^{k}\left(1 - \frac{j_{\ell-1}}{j_{\ell}}\right)^2$, giving the theorem's first claim.}

    Finally, we prove our asymptotic claim for $p=\infty$ by showing that
    \begin{equation}\label{eq:bounds}
        2\beta(1-\beta) \ln(d) - \frac{2(d-1)}{d} \leq \gamma(d) \leq 2\beta(1-\beta)\ln(d)
    \end{equation}
    where $\beta$ is the unique solution to $2\beta \ln \beta - \beta + 1 = 0$ in $(0,1)$. {\color{blue} (Table~\ref{tab:calculated-values} provides numerical values of $\gamma(d)$ and these bounds, where we see $\gamma(d)$ closely follows this upper bound.)} Numerically $\beta \approx 0.28467$, implying that $\gamma(d) \sim 2\beta(1-\beta)\ln(d) \approx 0.40726 \ln(d)$. The key observation in showing this is to note that $\gamma(d)$ is characterized by the recursive relation
    $$\gamma(d) = \max_{1 \le i < d} \left\{ \gamma(i) + \left(1-\frac{i}{d}\right)^2 \right\}, \qquad \gamma(1)=0.$$
    We prove both the upper and lower bounds inductively. At $d=1$ and $d=2$, one can verify directly.    
    The upper bound then follows inductively for $d>2$ by considering a continuous relaxation as
    \begin{align*}
        \gamma(d) 
        &\leq \max_{1 \le i < d} \left\{ 2\beta(1-\beta)\ln(i) + \left(1-\frac{i}{d}\right)^2 \right\}\\
        &\leq \max_{\phi\in (0,1)} \left\{ 2\beta(1-\beta)\ln(\phi) + \left(1-\phi\right)^2 \right\} + 2\beta(1-\beta)\ln(d) \\
        &= \left(2\beta(1-\beta)\ln(\beta) + \left(1-\beta\right)^2\right) + 2\beta(1-\beta)\ln(d)\\
        & {\color{blue}=} 2\beta(1-\beta)\ln(d)
    \end{align*}
    where the first line applies our inductive hypothesis, the second substitutes $i=\phi d$, the third notes that $\phi=\beta$ optimizes the expression, and the final line uses the definition of $\beta$.
    The lower bound follows inductively for $d>2$ by considering the simple selection $i=\lceil\beta d\rceil$ as
    \begin{align*}
        \gamma(d) &\geq  \gamma(\lceil \beta d \rceil) + \left(1-\frac{\lceil \beta d \rceil}{d}\right)^2 \\
        &\geq  2\beta(1-\beta)\ln(\lceil \beta d \rceil)  + \left(1-\frac{\lceil \beta d \rceil}{d}\right)^2 - \frac{2(\lceil \beta d \rceil - 1)}{\lceil \beta d \rceil}\\
        &\geq 2\beta(1-\beta)\ln(d) + 2\beta(1-\beta)\ln(\beta) + \left(1-\frac{\beta d +1}{d}\right)^2 - \frac{2\beta d}{\beta d + 1} \\
        &= 2\beta(1-\beta)\ln(d) - \frac{2(1-\beta)}{d} + \frac{1}{d^2} - \frac{2\beta d}{\beta d + 1}\\
        &\geq 2\beta(1-\beta)\ln(d) - \frac{2(d-1)}{d}
    \end{align*}
    where the second line applies our inductive hypothesis, the third applies bounds of $\beta d \leq \lceil\beta d\rceil \leq \beta d +1$, the fourth cancels terms using the definition of $\beta$, and the final collects and simplifies terms. \hfill $\Box$

    \begin{table}[t]
        \centering
        \caption{{\color{blue}Numerical values of $\gamma(d)$ and the bounds in~\eqref{eq:bounds}, rounded to five digits.}}
        \label{tab:calculated-values}
        \begin{tabular}{rccc}
            \toprule
            $d$
            & $2\beta(1-\beta)\ln(d)-\frac{2(d-1)}{d}$
            & $\gamma(d)$
            & $2\beta(1-\beta)\ln(d)$ \\
            \midrule
            $2$     & $-0.71771$ & $0.25000$ & $0.28229$ \\
            $3$     & $-0.88591$ & $0.44444$ & $0.44743$ \\
            $4$     & $-0.93541$ & $0.56250$ & $0.56459$ \\
            $5$     & $-0.94453$ & $0.64000$ & $0.65547$ \\
            $10$    & $-0.86224$ & $0.93444$ & $0.93776$ \\
            $100$   & $-0.10448$ & $1.86950$ & $1.87552$ \\
            $1000$  & $0.81528$  & $2.80460$ & $2.81328$ \\
            $10000$ & $1.75124$  & $3.74724$ & $3.75104$ \\
            \bottomrule
        \end{tabular}
    \end{table}

    \section{Proof of Theorem~\ref{thm:exact_low_dim} -- {\color{blue} Improved} Smoothings}

    {\color{blue} We prove the strict inequality $\delta^\star(d) < 0.5\ln(d)$ for $d\geq 3$ by demonstrating that a strictly better smoothing than LogSumExp exists with
    $$ \norm{f - \smax}_\infty \leq \frac{1}{2}\left[\ln(d-1) + \tfrac{1}{d(d-2)}\ln(d-1)\right] < \frac{1}{2}\ln(d).$$
    The strict inequality above follows by noting that 
    \begin{align}
        \ln(d) &= \ln(d-1) + \int_{d-1}^d\frac{1}{t}\, \mathrm{dt} \nonumber \\
        &> \ln(d-1) + \frac{1}{d} \nonumber \\
        &\geq  \ln(d-1) + \frac{1}{d(d-2)}\ln(d-1) \label{eq:needed-strict-ineq}
    \end{align}
    where the strict inequality uniformly lower bounds the integrand and the second inequality uses that $\ln(1 + t) \leq t$ at $t=d-2$.
    
    To construct this improvement, consider Nesterov-style smoothings
    \begin{equation}\label{eq: affineEntropyConjugate}
        f_{A, B}(x) \defeq \sup_{\lambda \in \Delta_d}\langle \lambda, x\rangle - h_{A, B}(\lambda).
    \end{equation}
    Here the dual function $h_{A,B}$ is defined as the closed convex proper function\footnote{Here we use the convention that $0\ln(0) = 0$.}
    \begin{equation}\label{eq: affineEntropy}
        h_{A,B}(\lambda) \defeq 
        \begin{cases}
            \frac{1}{A^2} \sum_{i=1}^d (A\lambda_i + B) \ln(A\lambda_i + B) - C& \text{if } \lambda \in \Delta_d \\
            +\infty & \text{otherwise}
        \end{cases}
    \end{equation}
    for any $A>0, B \geq 0$ with $A+dB=1$ and additive constant $C$ defined as
    $$ C = \frac{1}{2}(U_{A,B} + L_{A,B})$$
    where $U_{A,B} = \tfrac{(A + B)\ln(A + B) + (d-1)B\ln(B)}{A^2}$ and $L_{A,B} = \tfrac{A + dB}{A^2}\ln(\tfrac{A}{d} + B)$.
    
    Note that $f_{1, 0} = f_\mathtt{LSE} - \tfrac{1}{2}\ln(d)$, so this family includes LogSumExp (centered) as a special case. The following lemma allows us to select $A$ and $B$ more judiciously, yielding the claimed strictly better smoothing.
    \begin{lemma}\label{lem: parametrizedEntropy}
        For any $d \geq 2$ and $A>0,B\geq 0$ with $A+dB=1$, the function $f_{A,B}$ in~\eqref{eq: affineEntropyConjugate} is $1$-smooth in $\norm{\cdot}_{\infty}$ and 
        \begin{align*}
        \norm{f_{A, B} - \smax}_\infty &\leq \frac{1}{2}(U_{A,B} - L_{A,B}).
        \end{align*}
    \end{lemma}
    \begin{proof}[Proof of Lemma~\ref{lem: parametrizedEntropy}]
        First, we verify smoothness by showing $h_{A,B}$ is $1$-strongly convex with respect to $\norm{\cdot}_1$. We do this by reducing to the $1$-strong convexity of the entropy $h(z) = \sum_{i=1}^d z_i\ln(z_i)$ for $z\in\Delta_d$ (and $+\infty$ otherwise). That is, we know that for all $z,z'\in\Delta_d$ and $t\in[0,1]$,
        $$ h(t z + (1-t)z') \leq t h(z) + (1-t)h(z') - \frac{1}{2}t(1-t)\|z-z'\|^2_1.$$
        For any $\lambda,\lambda'\in\Delta_d$, consider the translated values with $z=A\lambda + B e$ and $z'=A\lambda' + B e$ where $e$ denotes the all ones vector. Since $A+Bd=1$, we have $z,z'\in\Delta_d$. Then dividing the above inequality by $A^2$ and subtracting $C$ from both sides, we have 
        $$ h_{A,B}(t \lambda + (1-t)\lambda') \leq t h_{A,B}(\lambda) + (1-t)h_{A,B}(\lambda') - \frac{1}{2}t(1-t)\|\lambda-\lambda'\|^2_1. $$
        Hence, $h_{A,B}$ is $1$-strongly convex with respect to $\norm{\cdot}_1$.
        
        Next, we bound $\norm{f_{A, B} - \smax}_\infty$. Observe that for any $x \in \R^d$, the difference $f_{A, B}(x) - \smax(x)$ is bounded above and below by
        \begin{align*}
            -\sup_{\hat\lambda \in \Delta_d} h_{A, B}(\hat\lambda) &= \sup_{\lambda \in \Delta_d}\ip{\lambda, x} -\sup_{\hat\lambda \in \Delta_d} h_{A, B}(\hat\lambda)  - \smax(x)\\
            &\leq f_{A, B}(x) - \smax(x) \\
            &\leq \sup_{\lambda \in \Delta_d}\ip{\lambda, x} -\inf_{\hat\lambda \in \Delta_d} h_{A, B}(\hat\lambda)  - \smax(x) \\
            &= - \inf_{\hat\lambda \in \Delta_d} h_{A, B}(\hat\lambda)
        \end{align*}
        where the equalities use the fact that $\smax(x) = \sup_{\lambda \in \Delta_d}\ip{\lambda, x}$, while the inequalities follow directly from the definition of $f_{A,B}$. This implies that
        \begin{align*}
            \norm{f_{A, B} - \smax}_\infty &\leq \max \left\{\sup_{\lambda \in \Delta_d} h_{A, B}(\lambda), -\inf_{\lambda \in \Delta_d} h_{A, B}(\lambda)\right\}.
        \end{align*}
        Now, since $h_{A, B}$ is convex and permutation invariant, the maximum over $\Delta_d$ is attained at the vertices of $\Delta_d$, while the minimum is attained at $(\tfrac{1}{d}, \dots, \tfrac{1}{d})$. Plugging in these $h_{A,B}$ values, the maximum value is $U_{A,B} - C$ and the minimum value is $L_{A,B} -C$. Simplifying terms, both components of the above maximum are equal to the claimed bound of $\frac{1}{2}(U_{A,B} - L_{A,B})$. \qedhere
    \end{proof}

    Consider the selection $A = \frac{d-2}{d-1}>0$ and $B = \frac{1}{d(d-1)}>0$. By the above lemma, the resulting $f_{A,B}$ provides a feasible smoothing, proving
    \begin{align*}
        \delta^\star(d) &\leq \norm{f_{A, B} - \smax}_\infty\\
        &\leq \frac{1}{2}\left[\frac{(A + B)\ln(A + B) + (d-1)B\ln(B)}{A^2} - \frac{A + dB}{A^2}\ln\left(\frac{A}{d} + B\right)\right]\\
        &= \frac{1}{2A^2}\left[\frac{d-1}{d}\ln\left(\frac{d-1}{d}\right) + \frac{1}{d}\ln\left(\frac{1}{d(d-1)}\right)- \ln\left(\frac{1}{d}\right)\right] \\
        &= \frac{1}{2A^2}\frac{d-2}{d}\ln(d-1)  \\
        &= \frac{1}{2}\left[\ln(d-1) + \frac{1}{d(d-2)}\ln(d-1)\right] < \frac{1}{2}\ln(d)
    \end{align*}
    where the final strict inequality is by~\eqref{eq:needed-strict-ineq}.
    }

    {\color{blue} Now we restrict to $d=2,3$ and provide a stronger, exactly optimal smoothing.} Note that $(1-1/d)^2$ is a lower bound on $\gamma(d)$, and hence $\delta^\star(d)$, for any $d$, established by considering Theorem~\ref{thm:main_result} with the simple sequence of length two $j_\ell: 1,d$. For $d=2,3$, we find that this lower bound is attained, i.e., $\delta^\star(d)=\gamma(d)=(1-1/d)^2$. In both cases, it suffices to consider a Nesterov-style smoothing using a quadratic distance function\footnote{For $d\geq 4$, this construction is no longer optimal. It is dominated by the LogSumExp construction in terms of smoothing gap $\|f-\smax\|_\infty$ and both fail to match our lower bound $\gamma(d)$.}
    $$f_d(x) = \sup_{\lambda \in \Delta_d} \langle \lambda, x\rangle - \frac{c_d}{2}(\|\lambda\|^2_2 - 1) - \gamma(d)$$
    where $\Delta_d$ is the simplex in $\mathbb{R}^d$ and $c_d = \max_{w\in\mathbb{R}^d\colon \sum w_i = 0} \|w\|^2_1/\|w\|^2_2$. 
    Here $c_d$ is chosen exactly to be the needed constant to ensure that $\frac{c_d}{2}(\|\lambda\|^2_2 - 1)$ is $1$-strongly convex in the one-norm on the simplex. Below, we briefly calculate $c_d$.
    
    By symmetry, we can assume that $w$ attaining the max has $k$ positive components taking value $a > 0$ and $d-k$ negative components taking value $-b < 0$ for some integer $1 \le k < d$. The constraint $\sum w_i = 0$ implies $ka - (d-k)b= 0$. We may scale $w$ without altering the ratio, so we choose $a = d-k$ and $b = k$.
    This yields a candidate vector $w^{(k)}$ with $k$ entries of $d-k$ and $d-k$ entries of $-k$. Note $\|w^{(k)}\|_1 = 2k(d-k)$ and $\|w^{(k)}\|_2^2 = dk(d-k)$.
    Substituting these into the objective function gives
    $$ \frac{\|w^{(k)}\|_1^2}{\|w^{(k)}\|_2^2} = \frac{4k^2(d-k)^2}{dk(d-k)} = \frac{4}{d} k(d-k). $$
    When $d$ is even, this is maximized by taking $k=d/2$, giving $c_d = d$. When $d$ is odd, this is maximized at both $k = (d \pm 1)/2$, giving $c_d = d - \frac{1}{d}$.

    Dually, the $1$-smoothness of this $f_d$ in the infinity-norm follows from the $1$-strong convexity of $\frac{c_d}{2}(\|\lambda\|^2_2 - 1)$ in the one-norm on the simplex. Similarly, the largest deviation between $f_d$ and $\smax$ is
    $$ \|f_d - \smax\|_\infty = \frac{1}{2}\left(\max_{\lambda\in\Delta_d} \frac{c_d}{2}\|\lambda\|^2_2 - \min_{\lambda\in\Delta_d} \frac{c_d}{2}\|\lambda\|^2_2\right).$$
    The maximum occurs at $\lambda=(1,0,\dots,0)$ and minimum occurs at $\lambda=(1/d,\dots,1/d)$, giving $\|f_d - \smax\|_\infty =  \frac{c_d}{4} (1-1/d)$. At $d=2$ and $d=3$, this equals $1/4$ and $4/9$, respectively, exactly agreeing with our lower bound of $(1-1/d)^2$. \hfill $\Box$

    \paragraph{Acknowledgments.} This work was supported in part by the Air Force Office of Scientific Research under award number FA9550-23-1-0531. Benjamin Grimmer was additionally supported as a fellow of the Alfred P. Sloan Foundation.

    {\small
    \bibliographystyle{unsrt}
    \bibliography{bibliography}
    }

    \appendix

        \section{Proof of Proposition~\ref{prop:reduction_result} -- A Statistical Lower Bound Reduction}\label{app:stat_reduction}
    Here, we take the dual perspective of designing a strongly convex function $h$ on the simplex. Let $\mathrm{Range}(h) = \max_{\lambda\in \Delta_d} h(\lambda) - \min_{\lambda\in \Delta_d} h(\lambda)$ denotes the range of $h$ on the simplex. We claim that
    \begin{equation} \label{eq:dual-form-over-range}
        \delta^\star(d) = \begin{cases}
        \min & \frac{1}{2}\mathrm{Range}(h)\\
        \mathrm{s.t.} & h\colon \Delta_d \rightarrow \mathbb{R} \text{ is $1$-strongly convex in the one-norm}
        \end{cases}
    \end{equation}
    The claimed equivalence follows by considering the bijection $h=f^*$:

    This bijection maps between the two problems' domains as (i) a convex function $f\colon \mathbb{R}^d\rightarrow \mathbb{R}$ has $\|f-\smax\|_\infty < \infty$ if and only if $f^*$ has domain $\Delta_d$ and (ii) a function $f$ is $1$-smooth with respect to the infinity norm if and only if $f^*$ is $1$-strongly convex with respect to the one-norm by~\cite[Theorem 5.26]{beck2017first}. The equivalence of objective values under this bijection follows by considering the max function's representation $\smax(x) = \sup_{\lambda \in \Delta_d} \langle \lambda, x \rangle$ as $f(x) = \sup_{\lambda \in \Delta_d} \langle \lambda, x \rangle - h(\lambda)$ must have
    $$ \smax(x) - \max_{\lambda \in \Delta_d} h(\lambda) \le f(x) \le \smax(x) - \min_{\lambda \in \Delta_d} h(\lambda). $$
    Consequently, the deviation $\|f-\smax\|_\infty$ is minimized when the range of $h$ is centered around zero, yielding an error of $\frac{1}{2}(\max_{\lambda\in \Delta_d} h(\lambda) - \min_{\lambda\in \Delta_d} h(\lambda))$.

    From this, the claimed lower bound follows directly from the standard lower bounds on ``regret'' in online learning. Cesa-Bianchi et al.~\cite[Theorem 3.2.3]{Cesa1997} provide a statistical argument\footnote{Cesa-Bianchi et al.~\cite{Cesa1997}'s lower bound is proven by considering the task of predicting a series of $T$ coin flips with expert advice from $d$ experts. Online learning seeks to make predictions with minimal deviation from the performance of the best expert, i.e., ``regret''. If the series of coin flips is entirely fair, then no algorithm can outperform on average, while the best of $d$ random experts will statistically outperform the average.} establishing that no algorithm can achieve a regret bound lower than $(1+o(1))\sqrt{\ln(d)T/2}$. As an upper bound, Shalev-Shwartz~\cite[Theorem 2.11]{Shalev2012} establishes that for any $1$-strongly convex regularizer $h$ on the simplex, the ``Follow-the-Regularized-Leader'' algorithm has regret at most $\sqrt{2\mathrm{Range}(h)T}$. Combining these upper and lower bounds on regret, it follows that
    $$ (1+o(1))\sqrt{\ln(d)T/2} \leq \sqrt{2\mathrm{Range}(h)T} \ \iff \ \mathrm{Range}(h) \geq (1/4 + o(1)) \ln(d).$$
    Then from the dual equivalence~\eqref{eq:dual-form-over-range}, $\delta^\star(d) \geq (1/8 + o(1))\ln(d)$. \hfill $\Box$
\end{document}